\newtheorem{fact}{Fact}
\newtheorem{obsn}{Observation}
\DeclarePairedDelimiter{\floor}{\lfloor}{\rfloor}
\begin{document}
\title{Novel ways of enumerating restrained dominating sets of cycles}
%
%\titlerunning{Abbreviated paper title}
% If the paper title is too long for the running head, you can set
% an abbreviated paper title here
%
\author{Sushmita Paul\inst{1} \and Ratanjeet Pratap Chauhan\inst{2} and Srinibas Swain\inst{2}\orcidID{0000-0001-7438-6952}}
\authorrunning{S. Paul et al.}
% First names are abbreviated in the running head.
% If there are more than two authors, 'et al.' is used.
%
\institute{National Institute of Technology Meghalaya, India,
\email{sushmita.nitm@gmail.com} 
\and
Indian Institute of Information Technology Guwahati, India\\
\email{\{ratanjeet.chauhan,srinibas\}@iiitg.ac.in}
%\and
%Khalifa University, UAE,\email{dputhal88@gmail.com}
}
\maketitle              % typeset the header of the contribution
%
% This paper is eligible for best student paper award.
 
\begin{abstract}
Let $G = (V, E)$ be a graph. A set $S \subseteq V$ is a restrained dominating set (RDS) if every vertex not in $S$ is adjacent to a vertex in $S$ and to a vertex in $V - S$. The restrained domination number of $G$, denoted by $\gamma_r(G)$, is the smallest cardinality of a restrained dominating set of $G$. Finding the restrained domination number is NP-hard for bipartite and chordal graphs. Let $G_n^i$ be the family of restrained dominating sets of a graph $G$ of order $n$ with cardinality $i$, and let $d_r(G_n, i)=|G_n^i|$. The restrained domination polynomial (RDP) of $G_n$, $D_r(G_n, x)$  is defined as $D_r(G_n, x) = \sum_{i=\gamma_r(G_n)}^{n} d_r(G_n,i)x^i$. In this paper, we focus on the RDP of cycles and have, thus, introduced several novel ways to compute $d_r(C_n, i)$, where $C_n$ is a cycle of order $n$. In the first approach, we use a recursive formula for $d_r(C_n,i)$; while in the other approach, we construct a generating function to compute $d_r(C_n,i)$. 

\keywords{Domination number \and Generating function \and Restrained domination \and Restrained domination polynomial.}
\end{abstract}

\section{Introduction}
Restrained domination was introduced by Telle et al. \cite{telle1997}. Finding the restrained domination number problem was initially viewed as a vertex partitioning problem. This problem is NP-hard for chordal graphs and bipartite graphs~\cite{gayla1999}, however one of the uses of restrained domination problem is to solve the prisoner and guard problem~\cite{gayla1999}.  Restrained domination problem is well studied and the details of this problem can be seen in~\cite{chen2012}.
 Kayathri et al.~\cite{kayathri2019}  gave a recursive method to find the RDP of cycle of order $n$ with the help of RDPs of paths of order $n-2$ and of order $n$, which is $D_r(C_n,x)=D_r(P_n,x)+3D_r(P_{n-2},x)$, where $P_n$ is a path of order $n$. The
 RDP of $P_n$, in terms of two variables $P_n$ and $x$, is given by  $D_r(P_n,x)=x^2[D_r(P_{n-2},x)+2D_r(P_{n-4},x)+D_r(P_{n-6},x)]$ for $n \geq 6$. However, they did not provide any bounds on $d_r(C_n,k)$. Moreover, in order to compute RDP of $C_n$, their approach forces computation of RDP of several paths of order $\leq n$.  

The main contribution of this paper is a generating function to compute the total number of RDSs of cycle $C_n$ with cardinality $k$, where $\gamma_r(C_n) \leq k \leq n$. We have also introduced a recursive method to compute the total number of restrained dominating sets of $C_n$, with cardinality $i$, using the  number of restrained dominating sets of $C_{n-1}$ and $C_{n-3}$, each with cardinality $i - 1$. We also report a few relations between the coefficients of restrained domination polynomial of cycles in Section~\ref{section4} and conclude by providing some future directions in Section~\ref{section5}. Some of the results presented in this paper are immediate extensions of the results in~\cite{ap2009}, although the problems addressed are different. The results reported in this paper are plain sailing, but some of the proofs used are combinatorially interesting.
Note that we assume $V(C_n)=\{1,2,\cdots,n\}$. 
\section{Restrained Dominating Sets of Cycles}\label{section2}
In this section, we discuss a few structural properties of the restrained dominating sets of cycles.
We first list some existing facts on RDS of cycles. 
The restrained domination number of cycle counted by Domke et al. is given in Fact~\ref{fact1}.
\begin{fact}~\cite{gayla1999}\label{fact1}
$\gamma_r (C_n) = n - 2 \floor{\frac{n}{3}} $. 
\end{fact}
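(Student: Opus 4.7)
The plan is to characterize restrained dominating sets of $C_n$ structurally and then minimize their cardinality. Label the vertices cyclically as $1,2,\dots,n$ and let $S\subsetneq V$ be any RDS. Decompose the cycle into maximal \emph{arcs} of consecutive vertices lying entirely in $S$ or entirely in $V\setminus S$; these arcs alternate around the cycle, so if $S\neq V$ there are some $k\geq 1$ arcs of each kind.

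The key structural step is to show that every maximal arc contained in $V\setminus S$ has length exactly $2$. If such an arc had length $1$, its single vertex $v$ would have both cyclic neighbors in $S$, leaving $v$ without any neighbor in $V\setminus S$ and violating the restrained condition. If such an arc had length at least $3$, an interior vertex $u$ would have both of its neighbors inside $V\setminus S$, so $u$ would have no neighbor in $S$, violating the domination condition. Hence length exactly $2$ is forced on every $(V\setminus S)$-arc.

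A direct count then yields the bound: with $k$ arcs of length $2$ in $V\setminus S$ we have $|V\setminus S|=2k$, and the $k$ intervening $S$-arcs, each of length at least $1$, account for the remaining $n-2k$ vertices, so $n-2k\geq k$ and therefore $|S|=n-2k\geq n-2\floor{\frac{n}{3}}$. To see this is tight I would construct, for $k=\floor{\frac{n}{3}}$, an RDS consisting of $k$ length-$2$ arcs in $V\setminus S$ separated by singleton $S$-arcs, with at most two of those singletons enlarged (to length $2$ or $3$) to absorb $n\bmod 3$, and directly verify both RDS conditions on this arrangement.

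The main obstacle is the structural dichotomy on $(V\setminus S)$-arcs: it requires the restrained and dominating conditions to cut against each other from opposite sides to pin the arc length to exactly $2$. Once that is in hand, the counting argument and the extremal construction are mechanical. The degenerate case $S=V$, which the arc decomposition does not cover, is dispensed with by a one-line check since it is never optimal for $n\geq 3$.
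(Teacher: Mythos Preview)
The paper does not prove this statement: it is stated as Fact~\ref{fact1} with a citation to Domke et al.~\cite{gayla1999} and no argument is given. So there is no in-paper proof to compare against.

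That said, your argument is correct and self-contained. The structural step---that every maximal arc of $V\setminus S$ has length exactly~$2$---is precisely the content of the paper's later Lemma~\ref{lemma5} (every component of $V\setminus S$ is a $K_2$), so your approach is fully compatible with the paper's framework. The counting $n-2k\geq k\Rightarrow k\leq\lfloor n/3\rfloor\Rightarrow |S|\geq n-2\lfloor n/3\rfloor$ is clean, and the extremal construction you sketch (distribute the $n\bmod 3$ surplus among the $S$-arcs) works for all $n\geq 3$. One small omission: you handle $S=V$ but should also note $S=\emptyset$ is excluded because it fails ordinary domination; this is a one-line check and does not affect the argument.
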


\begin{corollary} \label{coroll1}
    \begin{enumerate}[label=(\roman*)]
        \item For every odd $n \in \mathbb{N}$, $\gamma_r (C_n)$ is an odd number.
        \item For every even $n \in \mathbb{N}$, $\gamma_r (C_n)$ is an even number.
    \end{enumerate}
\end{corollary}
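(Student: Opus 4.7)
The plan is straightforward: since the formula from Fact~\ref{fact1} gives $\gamma_r(C_n)=n-2\lfloor n/3\rfloor$, I would simply observe that the quantity $2\lfloor n/3\rfloor$ is an even integer for every $n\in\mathbb{N}$, and hence $\gamma_r(C_n)\equiv n\pmod 2$. Both parts (i) and (ii) then follow immediately.

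More concretely, I would present it as follows. Write $\gamma_r(C_n)=n-2\lfloor n/3\rfloor$ using Fact~\ref{fact1}. Since $\lfloor n/3\rfloor$ is a nonnegative integer, the term $2\lfloor n/3\rfloor$ is even, so subtracting it from $n$ preserves parity. Thus if $n$ is odd, $\gamma_r(C_n)$ is odd, giving (i); and if $n$ is even, $\gamma_r(C_n)$ is even, giving (ii).

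If a more detailed argument is desired, I would split into the three residue classes of $n$ modulo $3$: writing $n=3k$, $n=3k+1$, or $n=3k+2$, one computes $\gamma_r(C_n)=k$, $k+1$, $k+2$ respectively, and checks in each case that the parity of the result matches the parity of $n$. This is case-analysis but entirely routine.

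There is no substantive obstacle here; the statement is an immediate parity observation on top of Fact~\ref{fact1}. The only thing to be careful about is to make clear that $2\lfloor n/3\rfloor$ is always even regardless of the residue of $n$ mod $3$, which is what makes the parities of $n$ and $\gamma_r(C_n)$ coincide.
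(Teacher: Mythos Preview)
Your proposal is correct and follows essentially the same approach as the paper: both invoke Fact~\ref{fact1} and observe that subtracting the even quantity $2\lfloor n/3\rfloor$ from $n$ preserves parity. Your version is slightly more explicit in stating why $2\lfloor n/3\rfloor$ is even, but the argument is the same.
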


\begin{proof}
\begin{enumerate}[label=(\roman*)]
    \item We know $\gamma_r (C_n) = n - 2 \floor{\frac{n}{3}} $ by Fact~\ref{fact1},  and $n$ is an odd number, therefore, $\gamma_r (C_n)$ is an odd number.

(ii) Can be proved same as (i).
\end{enumerate}
\end{proof}

Kayathri et al. characterized the cardinality of restrained dominating set given in Fact~\ref{fact2}.

\begin{fact}~\cite{kayathri2019} \label{fact2}
Let $S$ be a restrained dominating set of $C_n$. Then $|S| \equiv n\pmod 2$.
\end{fact}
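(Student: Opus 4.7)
The plan is to show that $|V \setminus S|$ is even, which immediately yields $|S| \equiv n \pmod{2}$ since $|S| = n - |V\setminus S|$. The key structural observation is that in $C_n$ every vertex has degree exactly $2$, so the two conditions defining a restrained dominating set leave almost no slack at vertices outside $S$.

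First I would fix an arbitrary $v \in V\setminus S$ and split into cases based on how the two neighbors of $v$ in $C_n$ are distributed between $S$ and $V\setminus S$. If both neighbors lie in $S$, then $v$ has no neighbor in $V\setminus S$, contradicting the restrained condition. If both neighbors lie in $V\setminus S$, then $v$ has no neighbor in $S$, contradicting the fact that $S$ is a dominating set. Hence $v$ must have exactly one neighbor in $S$ and exactly one in $V\setminus S$.

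From this I would conclude that the subgraph of $C_n$ induced on $V\setminus S$ is $1$-regular, hence a disjoint union of independent edges. In particular $|V\setminus S|$ is even, so $|S|$ and $n$ have the same parity, as required. (The degenerate case $V\setminus S = \emptyset$, i.e. $S=V$, is trivial since $|S|=n$.)

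I do not expect any real obstacle: the entire argument rests on the degree-$2$ structure of $C_n$, which forces the partition of $N(v)$ to be exactly one-in-and-one-out for every $v \in V\setminus S$. The one thing worth stating cleanly is that the argument uses \emph{both} defining properties of a restrained dominating set simultaneously — removing either would allow an arc of $V\setminus S$ of odd length and break the parity conclusion.
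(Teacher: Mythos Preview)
Your argument is correct. The paper does not actually prove Fact~\ref{fact2} itself --- it is stated as a cited result from~\cite{kayathri2019} with no proof given --- but the paper's Lemma~\ref{lemma5} establishes exactly the structural fact you use (every component of $V\setminus S$ induced in $C_n$ is a $K_2$), and its proof is essentially identical to yours: a vertex of $V\setminus S$ with both neighbors in $S$ violates the restrained condition, and one with both neighbors in $V\setminus S$ violates domination, so each such vertex has exactly one neighbor on each side. Your write-up is slightly cleaner in that you frame it as $1$-regularity of the induced subgraph rather than arguing by contradiction on component sizes, but the content is the same.
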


Let $C_n^i$ be the set of restrained dominating sets of $C_n$ with cardinality $i$.

\begin{obsn}\label{lemma1}
$C_j^i = \phi$, if and only if $i > j$ or $i < j-2\floor{\frac{j}{3}}$ or $ (j-i)\%2==1$.
\end{obsn}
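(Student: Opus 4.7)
The plan is to prove the biconditional by handling the two directions separately. The reverse direction (each condition forces $C_j^i = \emptyset$) is immediate from facts already established: the condition $i>j$ is impossible since $S \subseteq V(C_j)$ and $|V(C_j)|=j$; the condition $i < j - 2\floor{j/3} = \gamma_r(C_j)$ contradicts the definition of the restrained domination number via Fact~\ref{fact1}; and the condition $(j-i)\bmod 2 = 1$ contradicts Fact~\ref{fact2}, which forces $|S| \equiv j \pmod 2$.

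For the forward direction, I would prove the contrapositive: assuming $i \le j$, $i \ge j - 2\floor{j/3}$, and $j-i$ is even, I would exhibit an explicit restrained dominating set of $C_j$ of cardinality $i$. The key structural observation to set up the construction is that, on a cycle, a non-member $v \notin S$ has exactly two neighbors; the restrained condition requires at least one neighbor in $S$ and at least one in $V\setminus S$, so each non-member has exactly one neighbor in $S$ and exactly one in $V\setminus S$. Consequently, the non-members of any RDS of $C_j$ form maximal runs of length exactly $2$ along the cycle (runs of length $1$ would leave the non-member without a neighbor in $V\setminus S$, and runs of length $\ge 3$ would leave the interior vertex without a neighbor in $S$). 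Writing $j - i = 2k$ with $k \ge 0$, an RDS with $|S| = i$ therefore corresponds exactly to an arrangement around the cycle of $k$ non-member runs of size $2$ interleaved with $k$ member runs, whose total length is $j - 2k = i \ge k$ (the inequality being exactly $k \le \floor{j/3}$).

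The construction is then explicit: if $k = 0$, take $S = V(C_j)$; otherwise, list the $k$ non-member pairs around the cycle starting at positions $1, 1+\lceil j/k\rceil, \ldots$ so that consecutive pairs are separated by at least one member, and put all remaining vertices into $S$. The constraint $k \le \floor{j/3}$ guarantees that at least $k$ members remain to provide those separators, so a valid placement exists, giving $S \in C_j^i$. The main obstacle is simply verifying that the structural analysis of non-member runs is both necessary and sufficient; once this is pinned down, both directions fall out, and the three listed conditions are indeed the only obstructions to nonemptiness of $C_j^i$.
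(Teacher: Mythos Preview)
The paper states this result as an Observation and gives no proof at all, so there is nothing to compare your argument against; you have supplied a proof where the paper has none.

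Your argument is correct. The reverse direction is exactly as you say: each of the three conditions immediately rules out an RDS of the stated size via the trivial bound $|S|\le j$, Fact~\ref{fact1}, and Fact~\ref{fact2} respectively. For the forward direction, your structural analysis that the complement $V\setminus S$ of an RDS on a cycle decomposes into runs of length exactly $2$ is precisely the content of Lemma~\ref{lemma5}, which the paper proves later; your derivation of it here is fine. The only place to tighten is the explicit placement: the phrasing ``starting at positions $1,\,1+\lceil j/k\rceil,\ldots$'' leaves the wraparound gap to the reader. A cleaner concrete witness is to take the non-member pairs to be $\{3t-2,\,3t-1\}$ for $t=1,\dots,k$ and put every other vertex in $S$; then the members are $3,6,\dots,3k$ together with $3k+1,\dots,j$, so consecutive pairs are separated by vertex $3t\in S$, and since $k\le\lfloor j/3\rfloor$ we have $j\ge 3k$, whence vertex $j\in S$ separates the last pair from the first across the wraparound. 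With that adjustment the construction is watertight and both directions are complete.
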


We use Observation~\ref{lemma1} to obtain a characterisation on the  sets of restrained dominating sets of cycles.

\begin{proposition}  \label{lemma2}
For all $i \leq n$, the following properties holds 
\begin{enumerate}[label=(\roman*)]
\item If $C_n^i \neq \phi$, then $C_n^{i-1} = C_n^{i+1} = C_{n-1}^i = C_{n+1}^i = \phi$.

\item If $C_{n-1}^{i-1} = C_{n-2}^{i-2} = \phi$, then $C_{n-3}^{i-3} = \phi$.

\item If $C_n^i = \phi$ and $C_{n+2}^{i-2} = \phi$, then $C_{n+1}^{i-1} = \phi$.

\item If $C_n^i \neq \phi$ and $C_{n-2}^i \neq \phi$, then $C_{n-1}^i = \phi$.
\end{enumerate}
\end{proposition}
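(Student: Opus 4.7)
The plan is to deduce all four parts from Observation~\ref{lemma1}, which says $C_j^i = \emptyset$ iff at least one of three disjoint conditions holds: (a) $i > j$ (cardinality excess), (b) $i < j - 2\floor{j/3}$ (below $\gamma_r(C_j)$), or (c) $j - i$ is odd (parity mismatch). The strategy in every part is to track which of these three reasons is active and show that the same reason (or another one among the three) forces the target set to be empty.

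Parts (i) and (iv) are immediate via parity. In (i), the hypothesis $C_n^i \neq \emptyset$ forces $n - i$ to be even (by Fact~\ref{fact2}), and each of the four target sets has its order and cardinality shifted by $\pm 1$, flipping parity and triggering condition (c). Part (iv) is similar: from $C_n^i \neq \emptyset$ we again get $n - i$ even, hence $(n-1) - i$ odd, so $C_{n-1}^i = \emptyset$.

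For part (ii) I would split on the parity of $n - i$. If $n - i$ is odd, then $(n-1)-(i-1)$, $(n-2)-(i-2)$, and $(n-3)-(i-3)$ are all odd, so $C_{n-3}^{i-3} = \emptyset$ by (c). If $n - i$ is even, condition (c) is inactive for all three sets, so the emptiness of $C_{n-1}^{i-1}$ and $C_{n-2}^{i-2}$ comes from (a) or (b). The easy sub-case is $i > n$, which propagates to $i - 3 > n - 3$, giving $C_{n-3}^{i-3} = \emptyset$ by (a). Otherwise, noting that $n - i$ even excludes $i = n-1$ and the case $i = n$ contradicts $C_{n-1}^{n-1} \neq \emptyset$ (the full vertex set is always an RDS), we have $i \leq n - 2$, so both hypotheses come from (b): $i - 1 < \gamma_r(C_{n-1})$ and $i - 2 < \gamma_r(C_{n-2})$. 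I would then case-split on $n \bmod 3$ and, using the explicit formula from Fact~\ref{fact1}, check in each of the three residues that the active (tighter) bound implies $i - 3 < \gamma_r(C_{n-3})$, giving condition (b) for $C_{n-3}^{i-3}$.

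Part (iii) follows the same template. If $n - i$ is odd, the conclusion is by parity. If $n - i$ is even, the $i > n$ branch forces $i \geq n + 2$; the borderline value $i = n+2$ is ruled out because $C_{n+2}^n$ is non-empty (one can check $n \geq \gamma_r(C_{n+2})$ for $n \geq 1$), leaving $i > n + 2$, which pushes $i - 1 > n + 1$ and gives (a) for $C_{n+1}^{i-1}$. Otherwise $i \leq n$, so both hypotheses must come from (b), namely $i < \gamma_r(C_n)$ and $i - 2 < \gamma_r(C_{n+2})$, and I verify by the same $n \bmod 3$ casework that $i - 1 < \gamma_r(C_{n+1})$. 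The main obstacle is precisely this arithmetic book-keeping: because $\gamma_r(C_m)$ oscillates with period three, the inequalities shift between residue classes, and in each class one must identify the binding constraint among $\gamma_r(C_{n-1}), \gamma_r(C_{n-2})$ (resp.\ $\gamma_r(C_n), \gamma_r(C_{n+2})$) and confirm it still dominates the bound needed at the target index.
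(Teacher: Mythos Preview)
Your approach is correct and, in spirit, the same as the paper's: both derive all four parts from Observation~\ref{lemma1}. The paper proves (i) exactly as you do (the $\pm 1$ shift flips the parity of $n-i$) and then dispatches (ii)--(iv) with ``Similarly \ldots\ can be proved.'' Your treatment of (i) and (iv) is identical to the paper's. For (ii) and (iii) you actually supply more than the paper does: since $(n-1)-(i-1)=(n-2)-(i-2)=(n-3)-(i-3)=n-i$ (and likewise in (iii)), parity alone cannot settle these cases, and the $n \bmod 3$ casework on the $\gamma_r$ lower bound that you outline is what is really needed to make the ``similarly'' rigorous. So your proposal is not a different route but a fully fleshed-out version of the paper's sketch.
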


\begin{proof}
\begin{enumerate}[label=(\roman*)]
    \item  Since $C_n^i \neq \phi$, by Observation~\ref{lemma1}, $ (n-i)\%2=0 $, therefore  $(n-1-i)\%2 \neq 0$ $\Rightarrow$ $C_{n-1}^i = \phi$  and $C_n^{i+1} = \phi$; also $(n+1-i)\%2 \neq 0$ $\Rightarrow$ $C_{n+1}^i \neq \phi$ and $C_n^{i-1} \neq \phi$. \\

    Similarly (ii), (iii) and (iv) can be proved.
\end{enumerate}
\end{proof}

\begin{lemma} \label{lemma3}
If $C_n^i \neq \phi$, then
\begin{enumerate}[label=(\roman*)]

\item $C_{n-1}^{i-1} = C_{n-2}^{i-1} = \phi$ and $C_{n-3}^{i-1} \neq \phi$ if and only if $n=3k$ and $i=k$, for some $k \in \mathbb{N}$.
\item  $C_{n-2}^{i-1} = C_{n-3}^{i-1} = \phi$ and $C_{n-1}^{i-1} \neq \phi$ if and only if $i=n$.

\item $C_{n-1}^{i-1} \neq \phi, C_{n-3}^{i-1} \neq \phi$ and $C_{n-2}^{i-2} = \phi$ if and only if $n=3k+1$ and $i=k+1$.

\item  $C_{n-1}^{i-1} \neq \phi, C_{n-3}^{i-1} \neq \phi$ and $C_{n-5}^{i-1} = \phi$ if and only if $i=n-2$.

\item   $C_{n-1}^{i-1} \neq \phi$ and $C_{n-3}^{i-1} \neq \phi$ if and only if $n-2\floor{\frac{n-1}{3}} \leq i \leq n-2$.

\end{enumerate}
\end{lemma}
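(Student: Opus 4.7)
The plan is to translate every emptiness or non-emptiness statement in parts (i)--(v) into explicit linear inequalities in $i$ via Observation~\ref{lemma1}, and then to exploit the standing hypothesis $C_n^i \neq \phi$ (which forces $i \equiv n \pmod 2$ and $n - 2\lfloor n/3 \rfloor \leq i \leq n$) together with a case split on $n \bmod 3$. Writing $f(m) := m - 2\lfloor m/3 \rfloor$ for the restrained domination number of $C_m$, Observation~\ref{lemma1} says that $C_m^j \neq \phi$ exactly when $f(m) \leq j \leq m$ and $m - j$ is even.

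Two simplifications save work throughout. First, since $n - i$ is even, $(n-2) - (i-1) = n - i - 1$ is odd, so $C_{n-2}^{i-1} = \phi$ holds automatically; hence this hypothesis in (i) is redundant. Second, I would record once the translations
\[
C_{n-1}^{i-1} \neq \phi \iff i \geq f(n-1) + 1, \qquad C_{n-3}^{i-1} \neq \phi \iff f(n-3)+1 \leq i \leq n - 2,
\]
\[
C_{n-2}^{i-2} = \phi \iff i < f(n-2) + 2, \qquad C_{n-5}^{i-1} = \phi \iff i > n - 4,
\]
all valid under the standing hypothesis. The two emptiness translations require a short check: for $C_{n-2}^{i-2}$, the alternative $i > n$ is immediately impossible; for $C_{n-5}^{i-1}$, the alternative $i \leq f(n-5)$ contradicts $i \geq f(n)$, since a one-line residue check gives $f(n) > f(n-5)$ for every value of $n \bmod 3$.

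With these translations in hand, parts (i)--(iv) each reduce to a tiny linear system in $i$. I would tabulate $\lfloor m/3 \rfloor$ for $m \in \{n, n-1, n-2, n-3, n-5\}$ by $n \bmod 3$ once and reuse the table. In each part, substituting the three residue possibilities shows that exactly one class admits a solution; the parity constraint $i \equiv n \pmod 2$ then pins $i$ to a single value, matching the pair $(n, i)$ stated in the lemma. The converse direction in each of (i)--(iv) is immediate upon plugging the prescribed pair back into the inequalities.

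Part (v) is handled in one step: combining $C_{n-1}^{i-1} \neq \phi$ and $C_{n-3}^{i-1} \neq \phi$ yields $\max\bigl(f(n-1),\, f(n-3)\bigr) + 1 \leq i \leq n - 2$, and a residue-by-residue check gives $f(n-1) \geq f(n-3)$, with equality when $n \not\equiv 0 \pmod 3$ and strict inequality when $n \equiv 0 \pmod 3$. Therefore the lower bound simplifies to $f(n-1) + 1 = n - 2\lfloor (n-1)/3 \rfloor$, which is exactly the announced range. The main obstacle is not depth but bookkeeping: keeping the floor identities straight across residue classes. A single lookup table for $\lfloor m/3 \rfloor$ by $n \bmod 3$, set up at the start and reused across all five parts, turns the entire case analysis into mechanical verification.
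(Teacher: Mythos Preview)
Your proposal is correct and follows essentially the same route as the paper: both arguments translate each emptiness/non-emptiness hypothesis through Observation~\ref{lemma1} into linear constraints on $i$, combine these with the standing bounds $n-2\lfloor n/3\rfloor\le i\le n$ and the parity $i\equiv n\pmod 2$, and then solve. Your version is more streamlined---observing up front that $C_{n-2}^{i-1}=\phi$ is automatic and packaging the floor computations into a single residue table---but the underlying idea is identical to the paper's proof of part~(i), which it then declares representative of (ii)--(v).
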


\begin{proof}
\begin{enumerate}[label=(\roman*)]
 
    \item $(\Rightarrow) $ Since  $C_{n-1}^{i-1} = C_{n-2}^{i-1} = \phi$, by Observation~\ref{lemma1}, we have, $i-1>n-1$ or, $i-1<n-2-2\floor{\frac{n-2}{3}}$ o, $n-1-i+1$ and $n-2-i+1$ are odd. However, the third case is not possible because two consecutive numbers cannot be odd. If $i-1>n-1$, then $i>n$, then by Observation~\ref{lemma1}, $C_n^i = \phi$, a contradiction. So we have $i-1<n-2-2\floor{\frac{n-2}{3}}$, and $C_n^i \neq \phi$, which is $n-2\floor{\frac{n}{3}} \leq i < n-1-2\floor{\frac{n-2}{3}}$ together, resulting in $n=3k$ and $i=k$, for some $k \in \mathbb{N}$.

  $(\Leftarrow) $ If $n=3k$ and $i=k$ for some $k \in \mathbb{N}$, then by Observation~\ref{lemma1}, we have $C_{n-1}^{i-1} = C_{n-2}^{i-1} = \phi$ and $C_{n-3}^{i-1} \neq \phi$.
  \newline

$(ii), (iii),(iv)$ and $(v)$ can be proved in a similar way as of the proof of $(i)$.
\end{enumerate}
\end{proof}

By using Observation~\ref{lemma1}, Proposition~\ref{lemma2} and Lemma~\ref{lemma3}, we characterize the set of restrained dominating sets of $C_n$.
\begin{theorem} \label{theorem3}
For every $n \geq 4$ and $i \geq n-2\floor{\frac{n}{3}}$,
\begin{enumerate}[label=(\roman*)]

    \item If $C_{n-1}^{i-1} = C_{n-2}^{i-1} = \phi$ and $C_{n-3}^{i-1} \neq \phi$, then $C_n^i = C_n^{\frac{n}{3}} = \big\{ \{1,4,\dots,n-2\},\{2,5,\dots,n-1\},\{3,6,\dots,n\}\big\}$.
    
    \item If $C_{n-2}^{i-1} = C_{n-3}^{i-1} = \phi$ and $C_{n-1}^{i-1} \neq \phi$, then $C_n^i = C_n^n = \{\{1,2,\dots,n\}\}$.
    
    \item If $C_{n-1}^{i-1} \neq \phi, C_{n-3}^{i-1} \neq \phi$ and $C_{n-5}^{i-1} = \phi$, then $C_n^i = C_n^{n-2} = \{ \{1,2,\dots,n\} - \{x,y\}, \forall (x, y) \in E(C_n) \}$.
    
    \item If $C_{n-1}^{i-1} \neq \phi$ and $C_{n-3}^{i-1} \neq \phi$, then for $X_1 \in C_{n-3}^{i-1}$ and $X_2 \in C_{n-1}^{i-1}$\\
    $ C_n^i =  \Big\{  X_1 \cup 
    \begin{cases}
    \{n-2\},  & \quad \text{if } 1 \in X_1 \\%\text{, where } X_1 \in C_{n-3}^{i-1}\\
    \{n\} ,& \quad \text{if  1 and 2} \notin X_1 \\%\text{, where } X_1 \in C_{n-3}^{i-1}\\
    \{n-1\}, & \quad \text{if  1 and n-3} \notin X_1\\% \text{, where } X_1 \text{ in } C_{n-3}^{i-1}\\
    \end{cases} \Big\} \cup\\
     \Big\{  X_2 \cup 
    \begin{cases}
    \{n\}, & \quad \text{if } 1 \in X_2 \text{ or 1 and 2} \notin X_2\\% \text{, where } X_2 \in C_{n-1}^{i-1}\\
    \{n-1\}, & \quad \text{if  1 and n-1} \notin X_2 \text{, where } X_2 \\%\text{ in } C_{n-1}^{i-1}\\
    \end{cases} \Big\}.
    $

\end{enumerate}
\end{theorem}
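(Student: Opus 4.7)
The plan is to leverage Lemma~\ref{lemma3} to resolve cases (i)--(iii) by pinning down $(n,i)$ and arguing directly, and then handle case (iv) by an explicit extension/restriction correspondence. For (i), Lemma~\ref{lemma3}(i) forces $n=3k$ and $i=k$, so by Fact~\ref{fact1} the elements of $C_n^i$ are precisely the minimum RDSs of $C_{3k}$. A pigeonhole argument on the three consecutive blocks $\{3j-2,3j-1,3j\}$ shows each block contributes exactly one vertex to $S$; the restrained condition that every $v\notin S$ has a neighbor in $S$ then forces periodicity with period~$3$, giving precisely the three displayed rotations. For (ii), Lemma~\ref{lemma3}(ii) forces $i=n$, so the only candidate is the full vertex set, which is vacuously an RDS. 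For (iii), Lemma~\ref{lemma3}(iv) forces $i=n-2$; any such $S$ is $V\setminus\{x,y\}$, and the restrained requirement that each of $x,y$ has a neighbor in $V\setminus S$ forces $x$ and $y$ to be mutually adjacent, i.e.\ $\{x,y\}\in E(C_n)$; the converse direction is immediate.

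Case (iv) is the main combinatorial content, and I would establish it as a two-sided correspondence. The forward direction takes $X_1\in C_{n-3}^{i-1}$ (respectively $X_2\in C_{n-1}^{i-1}$) and verifies that the prescribed extension yields an RDS of $C_n$ of size $i$. The only vertices whose closed neighborhoods change when passing from $C_{n-3}$ to $C_n$ are the seam vertices $1$ and $n-3$ together with the three newly added vertices $n-2$, $n-1$, $n$ (and analogously for the $X_2$ case with seam vertices $1$, $n-1$ and new vertex $n$). The conditional rules --- add $\{n-2\}$ when $1\in X_1$, add $\{n\}$ when $1,2\notin X_1$, add $\{n-1\}$ when $1,n-3\notin X_1$, etc.\ --- are designed precisely so that the single added vertex supplies the required $S$-neighbor or $(V\setminus S)$-neighbor at each altered location; this is a straightforward but careful local check at each vertex.

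The hard part will be the reverse direction: every RDS $S$ of $C_n$ with $|S|=i$ must arise, uniquely, from a single $X_1$ or $X_2$. My plan is to classify $S$ by its intersection with the tail $T=\{n-2,n-1,n\}$. The parity constraint of Fact~\ref{fact2} together with the restrained condition applied at the tail severely restricts which subsets of $T$ can occur as $S\cap T$. For each surviving pattern, I would delete the appropriate tail vertices and check that the remainder is an RDS of $C_{n-3}$ or of $C_{n-1}$; simultaneously, the position of vertex $1$ and of vertex $n-3$ (respectively $n-1$) relative to $S$ is read off, matching exactly one of the conditional clauses in the displayed formula. Disjointness of the two families on the right-hand side and uniqueness within each family fall out of this tail-classification. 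I expect the trickiest subcase to be when $|S\cap T|=1$, since both extension schemes could a priori apply and one has to use the boundary conditions at vertex $1$ carefully to disambiguate which family the RDS belongs to.
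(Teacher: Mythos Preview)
Your approach is essentially the same as the paper's: reduce (i)--(iii) to Lemma~\ref{lemma3} to pin down $(n,i)$ and identify $C_n^i$ directly, and handle (iv) by a two-sided inclusion based on classifying $S$ by its intersection with the tail $\{n-2,n-1,n\}$. Your write-up for (i)--(iii) is in fact more detailed than the paper's, which simply invokes Lemma~\ref{lemma3} and states the answer without the pigeonhole or adjacency arguments you outline.

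One small remark on (iv): the theorem as stated is a set equality, so you do not strictly need the disjointness/uniqueness claim you sketch at the end. The paper's own reverse direction is content to show that each $Y\in C_n^i$ lands in $Y_1\cup Y_2$, and in fact exhibits some $Y$ as lying in \emph{both} families without worrying about which one to assign it to. Your stronger bijective goal is exactly what is needed for the subsequent Theorem~\ref{theorem4}, so pursuing it is not wasted effort, but be aware that it is more than what Theorem~\ref{theorem3} itself demands.
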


\begin{proof}
\begin{enumerate}[label=(\roman*)]

    \item Since $C_{n-1}^{i-1} = C_{n-2}^{i-1} = \phi$ and $C_{n-3}^{i-1} \neq \phi$, then by Lemma~\ref{lemma3} $(i)$, we have $n=3k$ and $i=k$, for some $k \in \mathbb{N}$. Therefore $C_n^i = C_n^{\frac{n}{3}} = \big\{ \{1,4,7,\dots,n-2\},\{2,5,8,\dots,n-1\},\{3,6,9,\dots,n\}\big\}$.
    
    \item Since $C_{n-2}^{i-1} = C_{n-3}^{i-1} = \phi$ and $C_{n-1}^{i-1} \neq \phi$, by Lemma~\ref{lemma3}$(ii)$, we have $i=n$. Therefore $C_n^i = C_n^n = \{{\{1,2,\dots,n\}}$\}.
    
    \item Since $C_{n-1}^{i-1} \neq \phi, C_{n-3}^{i-1} \neq \phi$ and $C_{n-5}^{i-1} = \phi$, by  Lemma~\ref{lemma3}$(iv)$, we have $i=n-2$. Therefore $C_n^i = C_n^{n-2}=\{\{1,2,\dots,n\}-\{x,y\}, \forall (x,y) \in E(C_n) \}$.
    
    \item Since $C_{n-1}^{i-1} \neq \phi$ and $C_{n-3}^{i-1} \neq \phi$.
     Suppose that $C_{n-3}^{i-1} \neq \phi$, then there exists an RDS,  $X_1 \in C_{n-3}^{i-1}$. \\Let $Y_1$ = 
 \Big\{  $X_1 \text{ }\cup$
$\begin{cases}
\{n-2\},  & \quad \text{if } 1 \in X_1 \\%\text{, for } X_1 \in C_{n-3}^{i-1}\\
    \{n\}, & \quad \text{if  1 and 2} \notin X_1 \\%\text{, for } X_1 \in C_{n-3}^{i-1}\\
    \{n-1\}, & \quad \text{if  1 and n-3} \notin X_1 \\%\text{, for } X_1 in C_{n-3}^{i-1}\\
\end{cases}$
\Big\}.\\
For vertices $1, 2, n-3$, by Fact$~\ref{fact2}$, we have vertices $2$ and $1 \notin X_1$, or vertices $1$ and $n-3 \notin X_1$, or vertex $1 \in X_1 $. 
If vertices $2$ and $1 \notin X_1$, then $X_1 \cup \{n\} \in C_n^i $.
If vertices $1$ and $n-3 \notin X_1$, then $X_1 \cup \{n-1\} \in C_n^i $.
If vertex $1 \in X_1$ then $X_1 \cup \{n-2\} \in C_n^i $.
Thus $Y_1 \subseteq C_n^i $.
Now suppose $C_{n-1}^{i-1} \neq \phi$, then there exists an RDS, $X_2 \in C_{n-1}^{i-1} $.\\
Let $Y_2$ =
\Big\{ $X_2 \text{ }\cup$
$\begin{cases}
\{n\}, & \quad \text{if } 1 \in X_2 \text{ or 1 and 2} \notin X_2 \\% \text{ for } X_2 \in C_{n-1}^{i-1}\\
    \{n-1\}, & \quad \text{if  1 and n-1} \notin X_2 \\%\text{ for } X_2 \text{ in } C_{n-1}^{i-1}\\
\end{cases} \Big\}$.\\
If vertices $2$ and $1 \notin X_2$, then $X_2 \cup \{n\} \in C_n^i $.
If vertices $1$ and $n-1 \notin X_2$, then  $X_2 \cup \{1\} \in C_n^i$ but this case is covered when vertex $1 \in X_1$, for $X_1 \in C_{n-3}^{i-1}$, thus, $X_2 \cup \{n-1\} \in C_n^i $. If $1 \in X_2$ then $X_2 \cup \{n\} \in C_n^i $. Therefore we have proved that $Y_1 \cup Y_2 \subseteq C_n^i$.\\

Now let $Y \in C_n^i$. By Fact~\ref{fact2}, among the vertices $n, n-1,$ and $n-2$, either vertices $n$ and $n-1 \notin Y$, or vertices $n-1$ and $n-2 \notin Y$, or the vertex $n-1 \in Y$.
If the vertices $n$ and $n-1 \notin Y$, then $Y = X \cup \{n-2\}$ for some $X \in C_{n-3}^{i-1}$, when vertex $1 \in X$, then $Y \in Y_1$, and for some $X \in C_{n-1}^{i-1}$ when vertices $n-1$ and $n-2 \notin X$, then $Y \in Y_2$.
If vertices $n-1$ and $n-2 \notin Y$ , then $Y = X \cup \{n\}$ for some $X \in C_{n-1}^{i-1}$ when vertex $1 \in X$, then $Y \in Y_2$, and for some $X \in C_{n-3}^{i-1}$ when vertex $n-3 \in X$, then $Y \in Y_1$.
If vertex $n-1 \in Y$, then $Y = X \cup \{n-1\}$ for some $X \in C_{n-1}^{i-1}$ when vertices $1$ and $n-1 \notin X$, then $Y \in Y_2$, and for some $X \in C_{n-3}^{i-1}$ when vertices $1$ and $n-3 \notin X$, then $Y \in Y_1$.
Now we have proved that $C_n^i \subseteq Y_1 \cup Y_2$.
So  $C_n^i = Y_1 \cup Y_2$.

\end{enumerate}
\end{proof}

Theorem~\ref{theorem3} provides a characterization on the size of restrained dominating sets of cycles. We use Theorem~\ref{theorem3} to obtain a recursion to compute restrained dominating sets of cycles.
\begin{theorem} \label{theorem4}
For any cycle $C_n$, $|C_n^i| = |C_{n-3}^{i-1}| + |C_{n-1}^{i-1}|$.
\end{theorem}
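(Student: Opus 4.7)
The plan is to prove the recurrence by applying the four-case structural classification from Theorem~\ref{theorem3}, which partitions the possibilities according to which of $C_{n-1}^{i-1}, C_{n-2}^{i-1}, C_{n-3}^{i-1}, C_{n-5}^{i-1}$ are empty. Before invoking that classification, I will dispose of the degenerate situation $C_n^i = \phi$: by Observation~\ref{lemma1}, this happens exactly when $i > n$, $(n-i)$ is odd, or $i < n-2\floor{\frac{n}{3}}$, and in each subcase the same condition propagates to both $C_{n-3}^{i-1}$ and $C_{n-1}^{i-1}$ (e.g., if $(n-i)$ is odd, then $(n-3)-(i-1)$ and $(n-1)-(i-1)$ are both odd), so the identity reads $0=0+0$.

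Assuming $C_n^i \neq \phi$, I will verify the three extremal subcases of Theorem~\ref{theorem3} by direct enumeration. In case (i), $n=3k$ and $i=k$, so $|C_n^i|=3$, and the same case applied to $C_{n-3}=C_{3(k-1)}$ at level $k-1$ yields $|C_{n-3}^{i-1}|=3$ while $|C_{n-1}^{i-1}|=0$ by hypothesis, giving $3=3+0$. In case (ii), $i=n$, so $|C_n^n|=1$, $|C_{n-1}^{n-1}|=1$, and $|C_{n-3}^{n-1}|=0$. In case (iii), $i=n-2$, so $|C_n^{n-2}|=n$ (one complement per edge), $|C_{n-3}^{n-3}|=1$, and $|C_{n-1}^{n-3}|=n-1$, giving $n=1+(n-1)$.

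The main work is in case (iv) of Theorem~\ref{theorem3}, where the explicit construction $C_n^i = Y_1 \cup Y_2$ is already provided. I will first show the extension map $X_1 \mapsto X_1 \cup \{v\}$ is a bijection $C_{n-3}^{i-1}\to Y_1$: for each $X_1$, exactly one of the three defining cases triggers, because if $1 \in X_1$ only the first case applies, while if $1 \notin X_1$, the restrained-domination property at vertex $1$ in $C_{n-3}$ forces exactly one of $\{2, n-3\}$ into $X_1$ and the other out, selecting between the second and third cases. Injectivity is immediate since $X_1 = Y \cap \{1,\ldots,n-3\}$. An entirely analogous argument (using vertex $1$ in $C_{n-1}$ with neighbors $\{2,n-1\}$) establishes $|Y_2| = |C_{n-1}^{i-1}|$.

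The hardest step is establishing $Y_1 \cap Y_2 = \phi$. The plan is to classify $Y \in Y_1 \cup Y_2$ by membership of $Y$ in $\{n-2, n-1, n\}$: the $Y_1$ cases are respectively distinguished by containing exactly one of $n-2, n, n-1$ (since $X_1 \subseteq \{1,\ldots,n-3\}$), while in $Y_2$, membership of $n$ vs.\ $n-1$ separates the two subcases. Most pairs then have incompatible signatures. The residual dangerous overlaps are $Y_1$(b) versus $Y_2$(d) (both containing $n$ but neither $n-2$) and $Y_1$(c) versus $Y_2$(e) (both containing $n-1$ but not $n$); in either overlap, the common set $X = Y \setminus \{n\}$ or $Y\setminus\{n-1\}$ would lie in $\{1,\ldots,n-3\}$ with $1 \notin X$, forcing vertex $n-1 \in C_{n-1}$ to have both neighbors $n-2$ and $1$ outside $X$, contradicting its domination in $C_{n-1}$. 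This rules out every overlap, so $|C_n^i| = |Y_1| + |Y_2| = |C_{n-3}^{i-1}| + |C_{n-1}^{i-1}|$.
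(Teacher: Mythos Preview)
Your proof is correct and follows the same case-split via Theorem~\ref{theorem3} that the paper uses. In fact your treatment is more complete: the paper's argument essentially restates the four cases of Theorem~\ref{theorem3} and then asserts the identity, whereas you explicitly verify that the extension maps $X_1\mapsto X_1\cup\{v\}$ and $X_2\mapsto X_2\cup\{v\}$ are bijections onto $Y_1,Y_2$, establish the disjointness $Y_1\cap Y_2=\phi$ (which the paper never checks), and also dispose of the degenerate case $C_n^i=\phi$.
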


\begin{proof}
In order to prove this theorem, we consider the following cases:%rewrite Theorem~\ref{theorem3} in the following form:
\begin{enumerate}[label=(\roman*)]
    \item From Theorem~\ref{theorem3}$(i)$, if $C_{n-1}^{i-1} = C_{n-2}^{i-1} = \phi$ and $C_{n-3}^{i-1} \neq \phi$, then $C_n^i = \big\{\{n\}\cup S_1, \{n-1\}\cup S_2, \{n-2\}\cup S_3 \mid $ \text{ vertices} 1 $ \text{and} $ 2 $ \notin S_1,\text{ vertices } $ 1 $\text{ and } $ n-3 $\notin S_2, 1\in S_3, S_1,S_2,S_3 \in C_{n-3}^{i-1}\big\}$.
    
    \item From Theorem~\ref{theorem3}$(ii)$, if $C_{n-2}^{i-1} = C_{n-3}^{i-1} = \phi$ and $C_{n-1}^{i-1} \neq \phi$, then $C_n^i=C_n^n=\big\{X \cup \{n\} \mid X \in C_{n-1}^{i-1} \big\}$.
    
    \item From Theorem~\ref{theorem3}$(iii)$, if $C_{n-1}^{i-1} \neq \phi$, $C_{n-3}^{i-1} \neq \phi$ and $C_{n-5}^{n-1} = \phi$, then \\
    $ C_n^i =  \big\{  X_1 \cup 
    \begin{cases}
    \{n-2\}, & \quad \text{ where } X_1 \in C_{n-3}^{i-1}\\
    \end{cases} \big\} \cup\\
    \Big\{  X_2 \cup 
    \begin{cases}
    \{n\}, & \quad \text{if } 1 \in X_2 \text{ or } 1 \text{ and } 2 \notin X_2 \text{, where } X_2 \in C_{n-1}^{i-1}\\
    \{n-1\}, & \quad \text{if  1 and n-1} \notin X_2 \text{, where } X_2 \in C_{n-1}^{i-1}\\
    \end{cases} \Big\}.
    $
\item From Theorem~\ref{theorem3}$(iv)$, if $C_{n-1}^{i-1} \neq \phi$ and $C_{n-3}^{i-1} \neq \phi$ then \\
    $ C_n^i =  \Big\{  X_1 \cup 
    \begin{cases}
    \{n-2\},  & \quad \text{if } 1 \in X_1 \text{, where } X_1 \in C_{n-3}^{i-1}\\
    \{n\}, & \quad \text{if  1 and 2} \notin X_1 \text{, where } X_1 \in C_{n-3}^{i-1}\\
    \{n-1\}, & \quad \text{if  1 and n-3} \notin X_1 \text{, where } X_1 in C_{n-3}^{i-1}\\
    \end{cases} \Big\} \cup\\
    \Big\{  X_2 \cup 
    \begin{cases}
    \{n\}, & \quad \text{if } 1 \in X_2 \text{ or  1 and 2 }\notin X_2 \text{, where } X_2 \in C_{n-1}^{i-1}\\
    \{n-1\}, & \quad \text{if  1 and n-1} \notin X_2 \text{, where } X_2 \in C_{n-1}^{i-1}\\
    \end{cases} \Big\}.
    $ \\
    By above construction, in every case, we have $|C_n^i| = |C_{n-1}^{i-1}| + |C_{n-3}^{i-1}|$.\\
\end{enumerate}
\end{proof}

Since computing $|C_n^i|$ using recursion (without dynamic programming) is computationally exponential, therefore we developed a technique to find $|C_n^i|$, by constructing a generating function.
\begin{theorem}\label{theorem2}
For every natural number $n \geq 4$, if $n-2\floor{\frac{n}{3}} \leq i \leq n$ then $|{C_n^i}|$ is coefficient of $x^ny^i$ in the series  expansion of the function \begin{equation}
    f(x,y)=\frac{x^4y^2(4+y^2+xy+3x^2+x^2y^2)}{1-xy-x^3y}
\end{equation}
\end{theorem}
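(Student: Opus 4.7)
The plan is to translate the recurrence from Theorem~\ref{theorem4}, namely $|C_n^i|=|C_{n-1}^{i-1}|+|C_{n-3}^{i-1}|$, into a functional equation for the bivariate generating function
\begin{equation*}
F(x,y)\;=\;\sum_{n\geq 4}\sum_{i=\gamma_r(C_n)}^{n}|C_n^i|\,x^n y^i.
\end{equation*}
Multiplying the recurrence by $x^n y^i$ and summing, the index shifts $(n,i)\mapsto(n-1,i-1)$ and $(n,i)\mapsto(n-3,i-1)$ become multiplication by $xy$ and $x^3y$, so I expect to arrive at
\begin{equation*}
F(x,y)\,(1-xy-x^3y)\;=\;R(x,y),
\end{equation*}
where $R(x,y)$ is a polynomial capturing finitely many boundary corrections. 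The main content of the proof is to identify $R(x,y)$.

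My first step would be to pin down $R(x,y)$ precisely. Since the shifted sums $xy\,F$ and $x^3y\,F$ only pick up mass from $n-1\geq 4$ and $n-3\geq 4$ respectively, the residue $R(x,y)$ collects contributions only from $n\in\{4,5,6\}$. For each such pair $(n,i)$ I would read off $|C_n^i|$ from Theorem~\ref{theorem3} together with Facts~\ref{fact1} and~\ref{fact2} (or by direct inspection): $|C_4^2|=4$, $|C_4^4|=1$, $|C_5^3|=5$, $|C_5^5|=1$, $|C_6^2|=3$, $|C_6^4|=6$, $|C_6^6|=1$. Subtracting from each of these the piece that $xy\,F$ and $x^3y\,F$ already supply (for example, at $(6,4)$ the term $xy\,F$ contributes $|C_5^3|=5$, leaving a residue of $1$), the surviving monomials should assemble into
\begin{equation*}
R(x,y)\;=\;4x^4y^2+x^4y^4+x^5y^3+3x^6y^2+x^6y^4\;=\;x^4y^2\bigl(4+y^2+xy+3x^2+x^2y^2\bigr),
\end{equation*}
which matches the numerator of $f$.

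To close the argument I would invoke Theorem~\ref{theorem4} once more to confirm that for every $n\geq 7$ both shifted indices $n-1$ and $n-3$ are already at least $4$, so the recurrence is visible to $F$ in full and contributes nothing to $R$ at those powers. Dividing by $1-xy-x^3y$ then gives $F(x,y)=f(x,y)$, and comparing coefficients completes the proof. The main obstacle will be the boundary bookkeeping for $n\in\{4,5,6\}$: because the theorem's convention starts $F$ at $n=4$, the true recurrence values at $n=3$ (namely $|C_3^1|=3$ and $|C_3^3|=1$, which appear on the right-hand side of the recurrence for $n=4$ and $n=6$) are not reflected inside $F$ and must be absorbed into $R$. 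Once this accounting is executed carefully, the remainder is routine algebra.
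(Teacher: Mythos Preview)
Your proposal is correct and follows essentially the same generating-function derivation as the paper: both translate the recurrence of Theorem~\ref{theorem4} into $F(x,y)(1-xy-x^3y)=R(x,y)$ and identify the same boundary polynomial $R(x,y)=4x^4y^2+x^4y^4+x^5y^3+3x^6y^2+x^6y^4$. The only cosmetic difference is that the paper extracts the boundary terms by reindexing the shifted sums down to the initial values $|C_1^1|,|C_2^2|,|C_3^1|,|C_3^3|$, whereas you compute the residue directly at $n\in\{4,5,6\}$; the arithmetic is equivalent.
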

The proof of Theorem~\ref{theorem2} is straightforward, we leave it for the readers as an exercise.

\begin{lemma} \label{lemma4} For every $n,k \in \mathbb{N}$ and $k<n$, if $\gamma_r (C_n)< k$ and $(n-k)\%2 = 1$, then $d_r (C_n, k) = 0$.
\end{lemma}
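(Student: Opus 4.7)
The plan is to observe that this lemma is an immediate parity corollary, and that the hypothesis $\gamma_r(C_n) < k$ is actually not needed for the conclusion. The single fact that does all the work is Fact~\ref{fact2}, which asserts that every restrained dominating set $S$ of $C_n$ satisfies $|S| \equiv n \pmod 2$. (Equivalently, Observation~\ref{lemma1} already records $C_n^k = \phi$ whenever $(n-k)\%2 = 1$.)

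Concretely, I would proceed as follows. First, suppose for contradiction that $d_r(C_n,k) \neq 0$; then $C_n^k$ is nonempty, so there exists a restrained dominating set $S$ of $C_n$ with $|S| = k$. Next, invoke Fact~\ref{fact2} to conclude $k = |S| \equiv n \pmod 2$, i.e., $(n-k)\%2 = 0$. This directly contradicts the hypothesis $(n-k)\%2 = 1$, so no such $S$ can exist and $d_r(C_n,k) = |C_n^k| = 0$.

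It is worth remarking in the write-up that the assumptions $\gamma_r(C_n) < k$ and $k < n$ play no role in the argument: the parity obstruction applies for every $k$, including $k \leq \gamma_r(C_n)$ and $k \geq n$. These side conditions merely delimit the range where the statement is genuinely informative (since outside $[\gamma_r(C_n), n]$ one already has $d_r(C_n,k) = 0$ for trivial reasons). There is no real obstacle here; the lemma is essentially a restatement of Fact~\ref{fact2}, and the only care required is to cite the parity fact explicitly rather than re-derive it.
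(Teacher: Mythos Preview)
Your proof is correct and follows essentially the same parity argument as the paper, which routes it through Corollary~\ref{coroll1} and Proposition~\ref{lemma2}(i) rather than invoking Fact~\ref{fact2} (or Observation~\ref{lemma1}) directly. Your version is a bit more direct, and your remark that the hypotheses $\gamma_r(C_n)<k$ and $k<n$ play no role is correct.
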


\begin{proof}
\begin{enumerate}[label=(\roman*)]By Corollary ~\ref{coroll1}(i), (ii), we know $i = \gamma_r (C_n)$ is either odd or even, if $n$ is odd or even respectively. By Proposition ~\ref{lemma2}(i), if $\gamma_r (C_n) < k$ and $(n-k)\%2 = 1$, then $d_r (C_n, k )= 0$.
\end{enumerate}
\end{proof}

\begin{lemma} \label{lemma5} Let the vertex set of $C_n$ be $V$ and $S$ be an RDS of size $k$, then every component of $V- S$ is $K_2$. Moreover, the number of $K_2$ components is $\frac{n-k}{2}$.
\end{lemma}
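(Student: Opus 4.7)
The plan is to analyze the two neighbors of an arbitrary vertex $v \in V - S$ and show that the RDS conditions force the induced subgraph on $V - S$ to be $1$-regular; once that is established, the rest is a counting remark.

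First, fix any $v \in V - S$. In $C_n$, $v$ has exactly two neighbors. The dominating condition demands at least one of them lies in $S$, while the restrained condition demands at least one lies in $V - S$. Since the two requirements must be satisfied simultaneously with only two neighbors available, exactly one neighbor of $v$ lies in $S$ and exactly one lies in $V - S$. Consequently, in the subgraph of $C_n$ induced by $V - S$, every vertex has degree exactly one.

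Second, observe that a $1$-regular graph is a disjoint union of edges, i.e., of copies of $K_2$. Therefore every connected component of $V - S$ (with the induced edges from $C_n$) is a $K_2$, which is the first part of the claim.

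Finally, the counting: $|V - S| = n - k$, and each $K_2$-component contains exactly two vertices, so the number of components equals $\tfrac{n-k}{2}$. Fact~\ref{fact2} guarantees $n - k$ is even, so the quotient is an integer, which is the main technical sanity check needed. No step presents a real obstacle; the only thing that might look delicate is ruling out a component of size $1$ or size $\geq 3$, but both are immediately excluded by the per-vertex degree argument above.
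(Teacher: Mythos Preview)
Your proof is correct and rests on the same observation as the paper's: each vertex of $V-S$ must have exactly one neighbor in $S$ and one in $V-S$. The paper packages this as a proof by contradiction on component sizes (size $1$ violates the restrained condition, size $\geq 3$ violates domination), while you give the equivalent direct $1$-regularity argument; the two are essentially the same.
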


\begin{proof} We will prove this by method of contradiction. On the contrary, let $S$ be an RDS and $V-S$ have components that are not $K_2$. Then the components are of size $1 \text{ or greater than } 2$. In the former case, there are no neighbours in $V-S$, while in the latter case, there exists at least one vertex such that it has no neighbours in $S$, which are contradictions to the definition of RDS. Since every component of $V - S$ forms a $K_2$ and $|V - S| = n - k$, therefore, there are exactly $\frac{n-k}{2}$ numbers of $K_2$ components . 
\end{proof}

\section{Restrained Domination Polynomial of a Cycle}\label{section4}
In this section, we show how to compute the restrained domination polynomial of cycles using the recursion of Theorem~\ref{theorem4}. For completion purpose, we restate the definition of restrained  domination polynomial of cycle.

The restrained  domination polynomial of cycle $C_n$, $D_r(C_n,x)$  is defined as

\[ 
    D_r(C_n, x) =\sum_{i=n-2\floor{\frac{n}{3}}}^{n} d_r(C_n, i)x^i
    \]

We now show the recursive computation of RDP of a cycle.
\begin{theorem} \label{theorem5}
For every $n \geq 4$,
%\begin{equation}
   \[ D_r(C_n,x)=x[D_r(C_{n-3},x) + D_r(C_{n-1},x)],\]
% \end{equation}
with initial values $D_r(C_1,x)=x$, $D_r(C_2,x)=x^2$, $D_r(C_3,x)=3x+x^3$.
\end{theorem}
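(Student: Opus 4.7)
The plan is to read off the polynomial identity directly from the cardinality recursion established in Theorem~\ref{theorem4}. Write $d_r(C_n,i)=|C_n^i|$, and note that by Observation~\ref{lemma1} we may extend the summation range over all nonnegative $i$, with $d_r(C_n,i)=0$ outside the legitimate interval $[n-2\lfloor n/3\rfloor,n]$. This uniform convention removes any need to track boundary indices separately.

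For $n\ge 4$, I would start from the definition and substitute Theorem~\ref{theorem4}:
\begin{align*}
D_r(C_n,x)
&=\sum_{i\ge 0} d_r(C_n,i)\,x^i
 =\sum_{i\ge 0}\bigl(d_r(C_{n-3},i-1)+d_r(C_{n-1},i-1)\bigr)x^i\\
&=x\sum_{i\ge 1} d_r(C_{n-3},i-1)\,x^{i-1}+x\sum_{i\ge 1} d_r(C_{n-1},i-1)\,x^{i-1}.
\end{align*}
Re-indexing each inner sum by $j=i-1$ immediately yields $x\bigl[D_r(C_{n-3},x)+D_r(C_{n-1},x)\bigr]$. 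The only subtlety is that Theorem~\ref{theorem4} requires $n-3\ge 1$, which is exactly the hypothesis $n\ge 4$; this is the reason the recursion cannot be pushed below $n=4$ and why the three small cases must be seeded separately.

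For the initial values I would argue by direct enumeration. For $C_1$, the single vertex is a valid RDS since $V-S=\emptyset$, giving $D_r(C_1,x)=x$. For $C_2$, only $S=V$ works (a singleton leaves its complement without a neighbour in $V-S$), giving $D_r(C_2,x)=x^2$. For $C_3$, each singleton $\{v\}$ is an RDS because the two remaining vertices are mutually adjacent and both adjacent to $v$; no size-$2$ set works (the excluded vertex has no neighbour in $V-S$); and $V$ itself is vacuously an RDS. This gives $D_r(C_3,x)=3x+x^3$.

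The main obstacle is almost purely bookkeeping: one must convince the reader that the unified sum $\sum_{i\ge 0}$ is legitimate despite the fact that the defining sum in $D_r(C_n,x)$ runs only from $n-2\lfloor n/3\rfloor$ to $n$. I would dispatch this by a single sentence appealing to Observation~\ref{lemma1}, so that the re-indexing $j=i-1$ transports the cycle-$(n-3)$ and cycle-$(n-1)$ terms exactly to their own polynomial ranges. Once that is noted, the algebra is a one-line manipulation and the theorem follows.
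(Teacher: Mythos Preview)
Your proposal is correct and matches the paper's (implicit) approach: the paper does not actually supply a proof for Theorem~\ref{theorem5}, stating it as an immediate consequence of the coefficient recursion $|C_n^i|=|C_{n-1}^{i-1}|+|C_{n-3}^{i-1}|$ of Theorem~\ref{theorem4} (cf.\ Theorem~\ref{theorem6}(i)). Your derivation via re-indexing, together with the direct enumeration of the initial values, is exactly what is intended and fills in the details the paper omits.
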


Using Theorem~\ref{theorem5}, we obtain the coefficients of $D(C_n, x)$ for $1 \leq n \leq 23$ in Table~\ref{ctable} and verify it by Theorem~\ref{theorem2}. Let
$d(C_n, j) = |C_n^j|$. We found a few relationships between the numbers $d(C_n, j) \text{ and } \gamma_r(C_n) \leq j \leq n$ which is shown in Table ~\ref{ctable}. The rows in Table~\ref{ctable} represent the orders of cycles and the columns represents the cardinality $i$ of restrained dominating sets. The coefficient of $x^i$ in the RDP of a cycle $C_n$ is the $(n,i)$-th entry in Table~\ref{ctable}.
%##############################################

\begin{table}[h]

n
$\updownarrow$
\begin{tabular}{c|ccccccccccccccccccccccccccc}
    & & & & & & & & $\leftarrow $& i& $\rightarrow $& & & & & & & & & &\\
  & 1 & 2 & 3 & 4 & 5 & 6 & 7 & 8 & 9 & 10 & 11 & 12 & 13 &14 & 15 & 16 & 17 & 18 & 19 & 20 & 21 & 22 & 23\\
 \hline

1 & 1 &  &  &  \\
2 & 0 & 1 &  &  \\
3 & 3 & 0 & 1 &  & & & & & & & & & & & \\
4 & 0 & 4 & 0 & 1 & & & & & & & & & & \\
5 & 0 & 0 & 5 & 0 & 1 & & & & & & & & & & & &\\
6 & 0 & 3 & 0 & 6 & 0 & 1 & & & & & & & & & & &\\
7 & 0 & 0 & 7 & 0 & 7 & 0 & 1 & & & & & & & & & &\\
8 & 0 & 0 & 0 & 12 & 0 & 8 & 0 & 1 & & & & & & & & &\\
9 & 0 & 0 & 3 & 0 & 18 & 0 & 9 & 0 & 1 & & & & & & & &\\
10 & 0& 0& 0& 10& 0& 25& 0& 10& 0& 1& & & & & & &\\
11 & 0& 0& 0& 0& 22& 0& 33& 0& 11& 0& 1&  & & & &\\
12 & 0& 0& 0& 3& 0& 40& 0& 42& 0& 12& 0& 1& & & & &\\
13 & 0& 0& 0& 0& 13& 0& 65& 0& 52& 0& 13& 0& 1& & & & &\\
14 & 0& 0& 0& 0& 0& 35& 0& 98& 0& 63& 0& 14& 0& 1& & \\
15 & 0& 0& 0& 0& 3& 0& 75& 0& 140& 0& 75& 0& 15& 0& 1& & \\
16 & 0& 0& 0& 0& 0& 16& 0& 140& 0&  192& 0& 88& 0& 16& 0& 1& &\\
17 & 0& 0& 0& 0& 0& 0& 51& 0& 238& 0& 255& 0& 102& 0& 17& 0& 1& \\
18 & 0& 0& 0& 0& 0& 3& 0& 126& 0& 378& 0& 330& 0& 117& 0& 18& 0& 1&\\  
19 & 0& 0& 0& 0& 0& 0& 19& 0& 266& 0& 570& 0& 418& 0& 133& 0& 19& 0& 1& &\\
20 & 0& 0& 0& 0& 0& 0& 0& 70& 0& 504& 0& 825& 0& 520& 0& 150& 0& 20& 0& 1& &\\
21 & 0& 0& 0& 0& 0& 0& 3& 0& 196& 0& 882& 0& 1155& 0& 637& 0& 168& 0& 21& 0& 1& \\ 
22 & 0& 0& 0& 0& 0& 0& 0& 22& 0& 462& 0& 1452& 0& 1573& 0& 770& 0& 187& 0& 22& 0& 1& &\\ 
23 & 0& 0& 0& 0& 0& 0& 0& 0& 92& 0& 966& 0& 2277& 0& 2093& 0& 920& 0& 207& 0& 23& 0& 1& &\\ \\
\end{tabular}
\caption{ $d_r(C_n,i)$, the number of restrained dominating set of of cycle $C_n$ with cardinality $i.$}\label{ctable}
\end{table}

Next we characterize the coefficients of RDP of cycles and find a few relations between the coefficients of RDPs of cycles of different orders.
\begin{obsn} The following properties holds for coefficients of $D_r(C_n,x)$, for every $n \in \mathbb{N}$:
\begin{enumerate}[label=(\roman*)]
    \item $d_r(C_{3n},n) = 3$.
    
    \item If $n \geq 3$, then $d_r(C_n,n)=1$.
    
    \item If $n \geq 3$, then $d_r(C_n,n-1)=0$.
    
    \item If $n \geq 3$, then $d_r(C_n,n-2)=n$.
\end{enumerate}
\end{obsn}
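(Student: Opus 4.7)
The plan is to dispatch each of the four claims using the structural machinery already in the paper, chiefly Fact~\ref{fact1}, Fact~\ref{fact2}, Theorem~\ref{theorem3}, and Lemma~\ref{lemma5}; each part turns out to be essentially a one-line consequence.

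For (ii), the full vertex set $V(C_n)$ vacuously satisfies the RDS conditions (there are no vertices outside $S$ whose neighbourhoods need to be checked), and it is the unique subset of size $n$, so $d_r(C_n,n)=1$. For (iii), I would appeal directly to Fact~\ref{fact2}: any RDS $S$ of $C_n$ satisfies $|S| \equiv n \pmod 2$, so the value $|S|=n-1$ is forbidden by parity and $d_r(C_n,n-1)=0$. Alternatively one could note that Lemma~\ref{lemma5} would force $V - S$ to contain $(n-k)/2 = 1/2$ components of type $K_2$, which is absurd.

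For (iv), I would use Lemma~\ref{lemma5}: with $|S|=n-2$ we have $|V-S|=2$, and the two vertices of $V - S$ must form a single $K_2$-component, i.e.\ they are adjacent on $C_n$. Conversely, for every edge $\{x,y\} \in E(C_n)$ the complement $V - \{x,y\}$ is an RDS, since $x$ and $y$ are adjacent to each other (supplying the required neighbour in $V-S$) and, because $n \geq 3$, each of $x$ and $y$ also has its other cycle-neighbour lying in $S$. Hence the RDSs of size $n-2$ are in bijection with edges of $C_n$, giving $d_r(C_n,n-2) = |E(C_n)| = n$.

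For (i), Fact~\ref{fact1} gives $\gamma_r(C_{3n}) = 3n - 2\lfloor n \rfloor = n$, so $C_{3n}^n$ is precisely the family of minimum RDSs of $C_{3n}$. Applying Theorem~\ref{theorem3}(i) with cycle order $3n$ and cardinality $n$ (the hypotheses $C_{3n-1}^{n-1} = C_{3n-2}^{n-1} = \emptyset$ and $C_{3n-3}^{n-1} \neq \emptyset$ are the defining case of Lemma~\ref{lemma3}(i)) yields the three explicit sets $\{1,4,\ldots,3n-2\}$, $\{2,5,\ldots,3n-1\}$, $\{3,6,\ldots,3n\}$, hence $d_r(C_{3n},n)=3$. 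The only mild subtlety is verifying in (iv) that $V - \{x,y\}$ really is an RDS when $n$ is as small as $3$, but the argument still works because the single remaining vertex is adjacent to both $x$ and $y$ on $C_3$. Otherwise each part is a direct appeal to results already in the excerpt.
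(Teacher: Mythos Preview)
The paper records this result as an Observation and supplies no proof, so there is nothing in the text to compare your argument against. Your proof is correct and draws on precisely the lemmas the paper has prepared: the vacuous check for (ii), Fact~\ref{fact2} (or Lemma~\ref{lemma5}) for (iii), Lemma~\ref{lemma5} together with the edge bijection for (iv), and Theorem~\ref{theorem3}(i) via Lemma~\ref{lemma3}(i) for (i). One small gap worth patching: Theorem~\ref{theorem3} is stated only for cycle order at least $4$, so your argument for (i) handles $n\geq 2$; the case $n=1$, namely $d_r(C_3,1)=3$, should be checked directly (each singleton in $C_3$ is an RDS, as your own discussion of the $n=3$ instance of (iv) already shows).
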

\begin{theorem} \label{theorem6}
The following properties holds for the coefficients of $D_r(C_n,x)$, $\forall n \in \mathbb{N}$:
\begin{enumerate}[label=(\roman*)]
    \item If $n \geq 4$ and $i \geq n-2\floor{\frac{n}{3}}$, then $d_r(C_{n},i) = d_r(C_{n-3},i-1) + d_r(C_{n-1},i-1)$.
    \item $d_r(C_{3n+1},n+1)=3n+1$.
    
    \item If $n \geq 2$, then $d_r(C_{3n-1},n+1) = \frac{n}{2}(3n-1)$.
        
    \item If $n \geq 5$, then $d_r(C_n,n-4)=\frac{n}{2}(n-5)$.
    
    \item If $n \geq 4$, then $\sum_{i=n}^{3n} d_r(C_i,n) = 2\sum_{i=n-1}^{3(n-1)} d_r(C_i,n-1)$.
    
    \item For $n \geq 3 $, 
    \begin{itemize}
        \item if n is even, then
        \[ \begin{cases}
        d_r(C_i,n) < d_r(C_{i+2},n), & \quad\text{ for }n \leq i \leq 2n - 2 \\
        d_r(C_i,n) > d_r(C_{i+2},n), & \quad\text{ for }2n \leq i \leq 3n-2 \\
           \end{cases}
        \]
        \item if n is odd, then \[ \begin{cases}
        d_r(C_i,n) < d_r(C_{i+2},n), & \quad\text{ for }n \leq i \leq 2n - 1 \\
        d_r(C_i,n) > d_r(C_{i+2},n), & \quad\text{ for }2n+1 \leq i \leq 3n-2 \\
        
        \end{cases} \]
    \end{itemize}
    
     \item If $S_n=\sum_{i=\gamma_r(C_n)}^{n} d_r(C_n,i)$, then for every $n \geq 4$, $S_n = S_{n-1} + S_{n-3}$ with initial values $S_1 = 1, S_2 = 1, S_3 = 4$.

\end{enumerate}
\end{theorem}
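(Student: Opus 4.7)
The plan is to treat all seven items uniformly by leveraging item~(i) — the recursion $d_r(C_n,i)=d_r(C_{n-3},i-1)+d_r(C_{n-1},i-1)$ — as the common engine. Item~(i) itself is an immediate transcription of Theorem~\ref{theorem4}, since $d_r(C_n,i)=|C_n^i|$ by definition; the remaining six items all bootstrap off it.

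For items (ii), (iii), and (iv) I would run one uniform template: apply~(i) at the target $(n,i)$, recognize one of the two summands as a value supplied by the Observation that precedes the theorem (which gives $d_r(C_{3m},m)=3$ and $d_r(C_m,m-2)=m$), and close the other summand by the inductive hypothesis on the same item at $n-1$. Concretely, (ii) unfolds as $d_r(C_{3n+1},n+1)=d_r(C_{3n-2},n)+d_r(C_{3n},n)=(3n-2)+3=3n+1$, with the first summand supplied by (ii) at $n-1$ and the second by the Observation. The same template handles (iii) (using (ii) to provide $d_r(C_{3n-2},n)=3n-2$ and (iii) at $n-1$ for $d_r(C_{3n-4},n)$) and (iv) (using the Observation $d_r(C_{n-3},n-5)=n-3$ and (iv) at $n-1$ for $d_r(C_{n-1},n-5)$). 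Base cases ($n=1$ for (ii), $n=2$ for (iii), $n=5$ for (iv)) are verified from Theorem~\ref{theorem5}'s initial values or from Table~\ref{ctable}.

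Items (v) and (vii) both arise by summing (i) over the free index and reindexing. For (vii), summing $d_r(C_n,i)=d_r(C_{n-3},i-1)+d_r(C_{n-1},i-1)$ over all $i$ and setting $j=i-1$ turns the right-hand side into $S_{n-3}+S_{n-1}$, once one checks that the handful of boundary indices produced by the shift kill the $d_r$-values they land on; this is guaranteed by the parity/support restrictions in Observation~\ref{lemma1} and Fact~\ref{fact2}. The initial values $S_1=S_2=1,\ S_3=4$ are read directly off $D_r(C_1,x)$, $D_r(C_2,x)$, $D_r(C_3,x)$ in Theorem~\ref{theorem5}. Item~(v) is the same manoeuvre applied to $\sum_{i=n}^{3n} d_r(C_i,n)$: one obtains two shifted copies of $\sum_j d_r(C_j,n-1)$, and the excess boundary terms with $j<n-1$ vanish (since then $|S|>|V|$) while those with $j>3(n-1)$ vanish (since $\gamma_r(C_j)>n-1$ for $j\ge 3n-2$), so both shifted sums collapse to $\sum_{j=n-1}^{3(n-1)} d_r(C_j,n-1)$.

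The main obstacle is (vi). My plan is to first establish the closed form
\[
d_r(C_{n+2k},n)=\binom{n}{k}+2\binom{n-1}{k-1}\qquad(0\le k\le n,\ \text{with } \binom{n-1}{-1}=0)
\]
as a short auxiliary lemma. Lemma~\ref{lemma5} forces $V\setminus S$ to consist of exactly $k$ disjoint $K_2$'s, so counting RDSs of size $n$ in $C_{n+2k}$ is the same as counting $k$-element mutually non-adjacent edge sets of $C_{n+2k}$; a case split on whether vertex~$1$ lies in $S$, in the pair $\{1,2\}$, or in the pair $\{n+2k,1\}$ produces the three summands $\binom{n}{k}$, $\binom{n-1}{k-1}$, $\binom{n-1}{k-1}$ respectively. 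With the closed form in hand, the difference $d_r(C_{n+2(k+1)},n)-d_r(C_{n+2k},n)$ splits into two pieces whose sign thresholds are $k=(n-1)/2$ and $k=n/2$; tracking both thresholds yields exactly the claimed unimodality, with peak at $i=2n$ for even $n$ and at $i=2n+1$ for odd $n$. An alternative route that avoids the closed form is an induction on $n$ via~(i): write $a_k=d_r(C_{n+2k},n)=b_{k-1}+b_k$ with $b_j=d_r(C_{(n-1)+2j},n-1)$ and control $a_{k+1}-a_k=(b_{k+1}-b_k)+(b_k-b_{k-1})$ using the inductive unimodality of the $b$-sequence; the crossover step at the peak of $b$, which shifts by one depending on the parity of $n$, is exactly where the closed form saves work. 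Everything else in the theorem collapses to a line or two once (i) is in hand.
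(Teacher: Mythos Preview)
Your proposal is correct and, for items (i)--(iv) and (vii), essentially matches the paper: each is handled by applying the recursion~(i) (which is Theorem~\ref{theorem4}) together with induction on~$n$ and the preceding Observation, exactly as you outline. Your treatment of~(v) is in fact a bit more direct than the paper's, which wraps the same reindexing inside an explicit induction on~$n$; both arrive at the same identity.

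For~(vi) you diverge from the paper. The paper follows what you call the ``alternative route'': induction on~$n$ via~(i), reducing $d_r(C_i,n)\lessgtr d_r(C_{i+2},n)$ to the two-step comparison $d_r(C_{i-3},n-1)\lessgtr d_r(C_{i+1},n-1)$ in the $(n{-}1)$-row and invoking the inductive hypothesis. You are right that this is delicate at the crossover (e.g.\ $i=2n$ for even~$n$), where the two single-step inequalities supplied by the hypothesis point in opposite directions; the paper does not isolate this boundary case. Your primary route---the closed form $d_r(C_{n+2k},n)=\binom{n}{k}+2\binom{n-1}{k-1}$ extracted from Lemma~\ref{lemma5} by conditioning on the position of vertex~$1$---sidesteps the issue entirely, since the two binomial differences change sign at $k=(n-1)/2$ and $k=n/2$ respectively, pinning the peak exactly where the statement places it. So on~(vi) your argument is genuinely different from, and tighter than, the paper's.
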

   
\begin{proof}
\begin{enumerate}[label=(\roman*)]
    % \item Since $C_{3n}^n = \big\{\{1, 4, 7,\dots, 3n - 2\}, \{2, 5, 8,\dots, 3n - 1\}, \{3, 6, 9,\dots, 3n\}\big\}$, so $d_r(C_{3n},n) = 3$.
    
    \item It follows from Theorem~\ref{theorem4}.
    
    \item We prove it by induction on $n$. The result is true for $n=1$, because $C_4^2 = \big\{ \{1, 2\}, \{2, 3\}, \{3, 4\}, \{4, 1\} \big\}$. Therefore, $d_r(C_4,2)=|C_4^2|=4$.
    Now let us suppose the result is true for all natural numbers less than $n$, and we have to prove it for $n$. By $(i), (ii)$ and induction hypothesis, we have
    \begin{align*}
        & d_r(C_{3n+1},n+1) \\
        &= d_r(C_{3n},n) + d_r(C_{3n-2},n) \\
        &= 3 + 3(n-1) + 1 \\
        &= 3n + 1.
        \end{align*}
    Therefore, by method of induction $d_r(C_{3n+1},n+ 1) = 3n+1$,$ \forall n \in \mathbb{N}$.

    \item We prove it by induction on $n$. Since $C_5^3=\big\{ \{1,2,3\}, \{2,3,4\}, \{3,4,5\}, \{4,5,1\}, \{5,1,2\} \big\}$, so $d_r(C_5,3) = 5$. The result is true for $n = 2$. Now let us suppose the result is true for all natural numbers less than n, and we have to prove it for $n$. By $(ii), (iii)$ and induction hypothesis, we have
    \begin{align*}
    & d_r(C_{3n-1},n+1) \\
    &= d_r(C_{3n-2},n) + d_r(C_{3n-4},n) \\
    &= 3(n-1)+1 + \frac{n-1}{2}(3(n-1)-1) \\
    &= \frac{n}{2}(3n-1).
    \end{align*}
    Therefore, by method of induction $d_r(C_{3n-1},n+ 1)=\frac{n}{2}(3n-1)$, $\forall n \geq 2$.
    % \item For any graph $G$ with $n$ vertices , $d_r(G,n) = 1$.
    
    % \item  For any graph $G$ with $n$ vertices , $d_r(G,n-1) = 0$.
    % \item $C_n^{n-2} = \{ \{1,2,\dots,n\} - \{x,y\} , \forall  (x,y) \in E(C_n) \}$.
    
    \item We prove it by induction on $n$. The result is true for $n=5$, because $C_5^1 = \phi$, so $d_r(C_5,1) = 0$. Now let us suppose the result is true for all natural numbers less than $n$, and we have to prove it for $n$. By $(ii), (vii)$ and induction hypothesis, we have
    
    \begin{align*}
    & d_r(C_{n},n-4) \\
    &= d_r(C_{n-1},n-5) + d_r(C_{n-3},n-5) \\
    &= \frac{(n-1)}{2}(n-1-5) + n-3 \\
    &= \frac{n}{2}(n-5).
    \end{align*}
    
    Therefore, by method of induction $d_r(C_{n},n-4)=\frac{n}{2}(n-5)$, $\forall n \geq 5$.
    \item We prove it by induction on $n$. Firstly, let  $n = 4$, then $\sum_{i=4}^{12}d_r(C_i,4) = 32 = 2 \sum_{i=3}^{9} d_r(C_i,3)$. Now suppose the result is true for all natural numbers less than $n$, and we have to prove it for $n$. By $(ii), (vii)$ and induction hypothesis, we have
    \begin{align*}
    &\sum_{i=n}^{3n}d_r(C_i,n)\\
    &= \sum_{i=n}^{3n}d_r(C_{i-1},n-1) + \sum_{i=n}^{3n}d_r(C_{i-3},n-1) \\
    &= 2\sum_{i=n-1}^{3(n-1)}d_r(C_{i-1},n-2) + 2\sum_{i=n-1}^{3(n-1)}d_r(C_{i-3},n-2) \\
    &= 2\sum_{i=n-1}^{3(n-1)}d_r(C_{i},n-1) 
    \end{align*}
    Therefore, by method of induction $\sum_{i=n}^{3n}d_r(C_i,n)= 2\sum_{i=n-1}^{3(n-1)}d_r(C_{i},n-1)$, $\forall n \geq 4$.
    
    % Let $n$ is even. To prove our claim, consider the following cases:
    \item We prove it by induction on $n$. The result holds for $n=3$. Now suppose the result is true for all natural number less than $n$, and we have to prove it for $n$.
    To prove our claim, consider the following cases when $n$ is even:\\
    \begin{itemize}
        \item Case 1:For $n \leq i \leq 2n-2:$ \\%$d_r(C_i,n) < d_r(C_{i+2},n) $\\
        %By induction on n, the result holds for n=2, now suppose the result is true for all natural number less than n, and we have to prove it for $n$.
        By Theorem~\ref{theorem4} and induction hypothesis, we have 
                \begin{align*}
        & d_r(C_i,n) \\
        &= d_r(C_{i-1},n-1) + d_r(C_{i-3},n-1) \\
        & < d_r(C_{i+1},n-1) + d_r(C_{i-1},n-1) \\
        &= d_r(C_{i+2},n)
    \end{align*}

        \item Case 2: For $2n \leq i \leq 3n-2:$\\% $d_r(C_i,n) > d_r(C_{i+2},n) $\\
        By Theorem~\ref{theorem4} and induction hypothesis, we have
        \begin{align*}
        & d_r(C_i,n) \\
        &= d_r(C_{i-1},n-1) + d_r(C_{i-3},n-1) \\
        & > d_r(C_{i+1},n-1) + d_r(C_{i-1},n-1) \\
        &= d_r(C_{i+2},n)
    \end{align*}
    Therefore, the statement holds when $n$ is even.
    \end{itemize}
    Similarly, we can prove for odd $n$.
    Therefore, by method of induction the proposition holds for all $n \geq 3$.
        \item By induction on $n$. Since we have $S_1=1, S_2=1, S_3=4$, so for $n=4$, $S_4 = 4 = S_3 +S_1$. Now let us suppose the result is true for all natural number less than $n$, and we have to prove it for $n$. By Theorem~\ref{theorem4} and induction hypothesis, we have
    \begin{align*}
        & S_n = \sum_{i=n-2\floor*{\frac{n}{3}}}^{n} d_r(C_n,i) \\
        &= \sum_{i=n-2\floor*{\frac{n}{3}}}^{n} (d_r(C_{n-1},i-1) + d_r(C_{n-3},i-1)) \\
        &= \sum_{i=n-1-2\floor*{\frac{n}{3}}}^{n-1} d_r(C_{n-1},i) +  \sum_{i=n-1-2\floor*{\frac{n}{3}}}^{n-3} d_r(C_{n-3},i) \\
        &= S_{n-1} + S_{n-3}.
    \end{align*}
    Therefore, by method of induction $S_n = S_{n-1} + S_{n-3}$, $\forall n \geq 4$.
\end{enumerate}
\end{proof}

\begin{theorem} \label{theorem7}
Total number of terms in $D_r(C_n,x)=1+\floor*{\frac{n}{3}}$.
\end{theorem}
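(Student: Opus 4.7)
The plan is to reduce the claim to a simple counting argument using Observation~\ref{lemma1}. By definition, the number of terms in $D_r(C_n,x)$ equals the number of indices $i$ with $\gamma_r(C_n)\le i\le n$ for which $d_r(C_n,i)=|C_n^i|\neq 0$. So the entire task is to identify the set of such indices and count them.

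First I would invoke Observation~\ref{lemma1} (with $j=n$) to characterise when $C_n^i\neq\phi$: namely, we need $i\le n$, $i\ge n-2\lfloor n/3\rfloor$, and $(n-i)$ even. By Fact~\ref{fact1}, the lower bound is exactly $\gamma_r(C_n)$, and by Corollary~\ref{coroll1} together with Fact~\ref{fact2}, the parity condition is automatic on the endpoints $i=n$ and $i=\gamma_r(C_n)$. Hence the indices contributing a non-zero coefficient are precisely
\begin{equation*}
  i\in\bigl\{\,n-2\lfloor n/3\rfloor,\ n-2\lfloor n/3\rfloor+2,\ \ldots,\ n-2,\ n\,\bigr\}.
\end{equation*}

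Second, I would count these indices. They form an arithmetic progression with common difference $2$, first term $n-2\lfloor n/3\rfloor$ and last term $n$, so the number of terms equals
\begin{equation*}
  \frac{n-\bigl(n-2\lfloor n/3\rfloor\bigr)}{2}+1 \;=\; \lfloor n/3\rfloor+1,
\end{equation*}
which is exactly the claimed value.

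There is essentially no obstacle here: the statement is a direct corollary of the characterisation of non-empty $C_n^i$ already established in Observation~\ref{lemma1}, combined with the parity constraint in Fact~\ref{fact2}. The only minor care needed is to verify that both endpoints of the progression are realised, i.e.\ that $d_r(C_n,n)\neq 0$ (trivially, $V(C_n)$ is itself an RDS) and that $d_r(C_n,\gamma_r(C_n))\neq 0$ (by definition of $\gamma_r$); both are immediate, so the argument is purely combinatorial bookkeeping.
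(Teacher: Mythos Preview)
Your proof is correct and follows essentially the same approach as the paper: both reduce to counting the arithmetic progression of indices $i\in\{\gamma_r(C_n),\gamma_r(C_n)+2,\ldots,n\}$ and compute $\tfrac{n-\gamma_r(C_n)}{2}+1=\lfloor n/3\rfloor+1$ via Fact~\ref{fact1}. Your version is simply more explicit in invoking Observation~\ref{lemma1} and verifying that the endpoints are realised, whereas the paper writes down the arithmetic directly.
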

\begin{proof}
Total number of terms in $D_r(C_n,x)$
\begin{align*}
&= (n-\gamma_r(C_n))/2 + 1 \\
&=\frac{n-(n-2\floor{\frac{n}{3}})}{2} + 1 \\
&=\floor{\frac{n}{3}} + 1
\end{align*}

\end{proof}

\section{Conclusion}\label{section5}
In this paper, we introduced two novel methods for computing RDP and enumerating RDS of a cycle, namely, a recurrence on computing the restrained domination number and a generating function. We also presented a few characterizations on the restrained dominating sets of cycles. We listed several properties of the coefficients of RDP of cycles and reported empirical data on cycles with order up to $23$ in Table~\ref{ctable}. An immediate extension of this work is to find polynomial time algorithms to compute RDS, for subclasses of special classes of graphs, such as bipartite graphs and chordal graphs. Our approach maybe applied to graphs with fixed size cycles in it. In addition, another direction for this study is to find the complexity class of computing RDP of a graph.

\bibliographystyle{plain}
\bibliography{myfile} 

\begin{thebibliography}{1}

\bibitem{ap2009}
S.~Alikhani and Y.~Peng.
\newblock {Dominating sets and domination polynomials of cycles}.
\newblock {\em arXiv preprint arXiv:0905.3268}, 2009.

\bibitem{chen2012}
L.~Chen, W.~Zeng, and C.~Lub.
\newblock {NP-completeness and APX-completeness of restrained domination in
  graphs}.
\newblock {\em Theoretical Computer Science}, 448:1--8, 2012.

\bibitem{gayla1999}
G.~S. Domke, J.~H. Hattingh, M.~A. Henning, and L.~R. Markus.
\newblock {Restrained domination in graphs}.
\newblock {\em Discrete Mathematics}, 203:61--69, 1999.

\bibitem{kayathri2019}
K.~Kayathri and G.~Kokilambal.
\newblock {Restrained domination polynomial in graphs}.
\newblock {\em Journal of Discrete Mathematical Sciences and Cryptography},
  14:761--775, 2019.

\bibitem{telle1997}
J.~A. Telle and Andrzej Proskurowski.
\newblock Algorithms for vertex partitioning problems on partial k-trees.
\newblock {\em SIAM Journal on Discrete Mathematics}, 10(4):529--550, 1997.

\end{thebibliography}
\end{document}